\newcommand\useleftqno{\renewcommand\@eqnnum{\hb@xt@.01\p@{}%
		\flap{\normalfont\normalcolor
			\hskip -\displaywidth(\theequation)}}}
\def\captionof#1#2{{\def\@captype{#1}#2}}
\newcounter{tablegroup}
\newcounter{subtable}[tablegroup]
\newcommand{\tend}[3][]{\xrightarrow[#2\to#3]{#1}}
\newcommand{\ds}{\displaystyle}
\newtheorem{thm}{Theorem}[section]
\newtheorem{lem}[thm]{Lemma}
\newtheorem{Que}[thm]{Question}
\newtheorem*{Conj}{\textbf{Conjecture}}
\numberwithin{equation}{section}
\newcommand{\N}{\mathbb N}
\newcommand{\C}{\mathbb C}
\newcommand{\bmu}{\bm \mu}
\newcommand{\bml}{\bm \lambda}
\newcommand{\Rea}{\textrm{Re}}
\begin{document}
	\title[ Exponential sums of the
	M\"{o}bius function .]
	{Exponential sums of the
		M\"{o}bius function and flat polynomials}
	
	\author[\MakeLowercase{el}. H. \MakeLowercase{el} Abdalaoui]{\MakeLowercase{el} Houcein 
		\MakeLowercase{el} Abdalaoui}
	
	\address{e. H. el Abdalaoui, Normandy University of Rouen,
		Department of Mathematics, LMRS  UMR 6085 CNRS, Avenue de l'Universit\'e, BP.12,
		76801 Saint Etienne du Rouvray - France.}
	\email{elhoucein.elabdalaoui@univ-rouen.fr}
	
	\subjclass[2000]{ 11L03,11M26, 37A30, 30A40}

	\keywords{Erd\"{o}s flat polynomials, semi-flat polyomials, Liouville function, M\"{o}bius function, Riemann hypothesis, Littlewood criterion}
	
	\begin{abstract}
		It is shown that if the analytic polynomials with M\"{o}bius  are $L^p$-semi-flat then the Riemann hypothesis holds. It turns out that this problem is a particular case of the weak form of flat polynomials problem asked by Erd\"os in his 1957's paper. Under an extra-condition, we establish also the converse. We further point out an oversight in Littlewood paper \cite{Littlewood} and we correct it. It is also shown that the recent flat polynomials in Littlewood sense constructed by P. Balister and al. are not $L^\alpha$-flat, for any $\alpha \geq 0$. 
	\end{abstract}
	\maketitle
	\section{Introduction}
	The purpose of this article is to present an elementary question on the norms of some polynomials with coefficients $\pm 1$ on the circle which implies Riemann hypothesis. This question can be seen as a special case in the weak form of the Erd\"{o}s-Newman problem on the existence of the so called flat polynomials. \\
	
	Let $(P_n(z))$ be a sequence of analytic polynomials on the circle, that is, for any $n \geq 1$,
	
	$$P_n(z)=\sum_{j=0}^{d_n}a_{j,n}z^j, \; \; |z|=1. $$
	
	This sequence is said to be  $L^2$- normalized polynomials if $$\big\|P_n\big\|_2=\sqrt{\sum_{j=0}^{d_n}|a_{j,n}|^2}=1.$$ 
	If further it converge to 1 in some sense then it is said to be flat, and it is ultraflat if the convergence is uniform.\\

	We warn the reader that this notion should not be confused with the notion of flatness introduced by Littlewood in his 1966's  paper \cite{Littlewood}. For this later notion (which we attribute to Littlewood), the sequence of $L^2$- normalized polynomials $(P_n)$ is flat in the sense of Littlewood if there is a constant $c,C>0$ such that $c \leq |P_n(z)|  \leq C,$ for all $z$ with modulus $1$. Therein, Littlewood asked if there exist a sequence of flat polynomials with coefficients of modulus $1$ or $\pm 1$ which are flat in his sense. He further stated three conjectures on flatness \cite{Littlewood} and \cite[Problem 19]{Littlewood2}. Very recently, using Rudin-Shapiro polynomials combined with Spencer's six deviations lemma,  P. Balister and al. constructed a flat polynomials in the Littlewood sense  \cite{Bal-al}. However, those polynomials are not $L^\alpha$-flat, for any $\alpha \geq 0$ \cite{elabdal-UDT}. We further notice  that Littlewood addressed Erd\"{o}s-Newman question in several papers \cite{Littlewood3}, \cite{Littlewood4}, \cite{Littlewood5}  . Therein, he gives an alternative proof that the cosines polynomials with coefficients $\pm 1$ are not ultraflat. This later result is due to Erd\"{o}s. In fact, Littlewood established that those polynomials are not $L^\alpha$-flat, for any $\alpha \geq 0.$ He also provided a condition on the coefficients of a real trigonometric polynomials to insure that those polynomials are not $L^\alpha$-flat. But, it is seems that Littlewood had conflicting feelings about the existence of ultraflat polynomials. 
	 He further stated in \cite[p.334]{Littlewood4} that the sequence of trigonometric polynomials 
	$$P_n(\theta)=\sum_{j=1}^{n}\cos(2^{2^j}\theta),$$
	is $L^\alpha$-flat for any $\alpha>0$. But, according the central limit theorem for the Hadamard lacunary trigonometric series due to Salem-Zygmund \cite[Vol 2, p. 264]{Zygmund} combined with the methods in \cite[p.176]{Abd-Nad}, it follows that for any $1 \leq \alpha <2$, $$\big\|P_n\big\|_\alpha \tend{n}{+\infty}\Gamma\Big(\frac{\alpha}{2}+1\Big).$$ 

    Here, we are interest in the semi-flatness of an analytic trigonometric polynomials with Liouville  or M\"{o}bius coefficients
    (see \eqref{eqLiouville} and \eqref{eqMobius}). A sequence of $L^2$- normalized polynomials $(P_n)$ is semi-flat if $\big|P_n(z)\big|$ is bounded above by some absolutely constant in some sense. Precisely, we consider $L^\alpha$-semi-flat polynomials with $\alpha>2$, that is, a sequence of analytic trigonometric polynomials $(P_n)$ with coefficients in $\{\pm 1, 0\}$ such that 
     $\big\|P_n(z)\big\|_{\alpha}$ is bounded above by some absolutely constant which may depend on $\alpha$.\\

    Following \cite{MS}, the motivation of the study of the polynomials $(P_n(\theta))$ with Liouville  or M\"{o}bius coefficients goes back to P\'{o}lya's approach to the Riemann hypothesis. Therein, the authors noticed that although Haselgrove \cite{H} disproved P\'{o}lya's conjecture, there is a good reason to resurrect this approach. It turns out that, Hajela and Smith \cite{HS}, under Generalized Riemann Hypothesis (GHR) established the $\Big\|P_n(\theta)\Big\|_\infty= O(n^{\frac{5}{6}+\epsilon})$. Later, under the same hypothesis, Baker and Harman \cite{BH} improved the exponent to $\frac{3}{4}$. Unconditionally, Murty and Sankaranarayanan\cite{MS}, proved that for $\theta$ of type 1, $|P_n(\theta)| \ll n^{\frac{4}{5}+\epsilon}$. Maier and Sankaranarayanan \cite{MaS} improved the exponent to $\frac{3}{4}$, but under some extra-condition. For Weyl type polynomials with Liouville  or M\"{o}bius coefficients, Y. Jiang and G. L\"{u} established recently similar results, we refer to \cite{JL} and the references therein.

    Our result is in the spirit of the previous investigations. It is also our hope that it may shed some light on the connection between the distribution of those polynomials and Riemann hypothesis.\\

   The plan of the paper is as follows. In Section \ref{Tools}, we recall a basic notions and the results that we need in the sequel. In section \ref{main}, we state and prove our main result. 
\section{Set-up and tools}\label{Tools}

Let us denote by the circle $S^1=\big\{z \in \C, |z|=1\big\}$ and for any $\alpha>1$, we define the $L^\alpha$-norm of any trigonometric polynomials $P$ by  
$$\big\|P\big\|_\alpha=\Big(\int \big|P(z)\big|^\alpha dz\Big)^{\frac{1}{\alpha}},$$
where $dz$ is the Lebesgue measure on the circle. For any $n \in \N^*$, we denote by 
$(\xi_{n,j})_{j=0}^{n-1}$, the  the $n$th roots of unity given by 
$$ \xi_{n,j}=e^{\frac{2 i \pi j}{n}},~~~~j=0,\cdots, n-1.$$
The sequence of analytic trigonometric polynomials $(P_n)$ is said to be $L^\alpha$-semi-flat polynomials with $\alpha \geq 1$ if there exist a constant $K_\alpha>0$ such that for any $n \in \N$, 

$$\big\|P_n(z)\big\|_{\alpha} \leq K_\alpha.$$ 

The analytic trigonometric polynomials with Liouville  or M\"{o}bius coefficients are defined as follows
\begin{align}
P_n(z)&=\sum_{j=0}^{n-1}\bml(j+1)z^j,~~~|z|=1, \label{eqLiouville}\\
Q_n(z)&=\sum_{j=0}^{n-1}\bmu(j+1)z^j,~~~|z|=1 \label{eqMobius}
\end{align}
where $\bml$ and $\bmu$ are respectively the Liouville function and the M\"{o}bius function. The  Liouvile function  is given by 
\begin{equation*}\label{Liouville}
\bml(n)= \begin{cases}
1 {\rm {~if~}} n=1; \\
(-1)^r  {\rm {~if~}} n
{\rm {~is~the~product~of~}} r {\rm {~not~necessarily~distinct~prime~numbers}}; 
\end{cases}
\end{equation*}
The Liouville function is related to another famous functions in number theory called the M\"{o}bius function. Indeed, 
the M\"{o}bius function is defined for the positive integers $n$ by
\begin{equation*}\label{Mobius}
\bmu(n)= \begin{cases}
\bml(n) {\rm {~if~}} n \rm{~is~not~divisible~by~the~square~of~any~ prime}; \\
0  {\rm {~if~not}}
\end{cases}
\end{equation*}
Those two functions are related to 
Riemann $\zeta$-function via the formulae
\[
\sum_{n=1}^{+\infty}\frac{\bmu(n)}{n^s}=\frac{1}{\zeta(s)}, \qquad
\sum_{n=1}^{+\infty}\frac{\bml(n)}{n^s}=\frac{\zeta(2s)}{\zeta(s)}
~~{\rm {with}}~~~ \Rea{(s)}>1,
\]
and 
\[
\sum_{n=1}^{+\infty}\frac{|\bmu(n)|}{n^s}=\frac{\zeta(s)}{\zeta(2s)}
~~{\rm {with}}~~~ \Rea{(s)}>1.
\]
Let us further notice that the Dirichlet inverse of the Liouville function is the absolute value of the M\"{o}bius function.\\

For the reader's convenience, we briefly recall some useful well-known results on the Riemann $\zeta$-function. The  Riemann $\zeta$-function is defined, 
for $s \in \C$, $\Rea(s)>1$ by 
$$\zeta(s)=\sum_{n=1}^{+\infty}\frac{1}{n^s},$$
or by the Euler formula
$$\zeta(s)=\prod_{\overset{p}{\textrm{~prime}}}\Big(1-\frac{1}{p^s}\Big)^{-1}.$$
It is easy to check that $\zeta$ is analytic for $\Rea(s)>1$. Moreover, it is well-known that $\zeta$ is regular for all values of $s$ except $s=1$, where there is a
simple pole with residue 1. Thanks to the functional equation \footnote{See \cite[Chap. II]{Titchmarsh} for its seven proofs.} 
$$\zeta(s)=2^{s}\pi^{s-1}\sin\Big(\frac{\pi s}{2}\Big) \Gamma(1-s)\zeta(1-s),$$
where $\Gamma$ is the gamma function given by 
$$\Gamma(z)=\int_{0}^{+\infty}x^{z-1}e^{-x}dx,~~~~~~~~~\Rea(z)>0.$$ We notice that the gamma function never vanishes and it is analytic 
everywhere except at $z=0, -1, -2, ...,$ with the residue at $z=-k$ is equal to
$\frac{(-1)^k}{k!}$.   We further have the following  formula (useful in the proof of the functional equation) 
$$\Gamma(s) \sin\Big(\frac{\pi s}{2}\Big)=\int_{0}^{+\infty} y^{s-1} \sin\big(y\big) dy, 
~~~~0<\Rea(s)<1.$$
For the proof of it we refer to \cite[p.82]{Redmancher}. Changing $s$ to $1-s$, we obtain
$$\zeta(1-s)=2^{1-s}\pi^{-s}\cos\Big(\frac{\pi s}{2}\Big) \Gamma(s)\zeta(s).$$
Putting
$$\xi(s)=\frac{s(s-1)}{2}\pi^{\frac{-s}{2}}\Gamma\big(\frac{s}{2}\big)\zeta(s),$$
and 
$$E(s)=\xi\Big(\frac{1}{2}+is\Big).$$
It follows that
$$\xi(s)=\xi(1-s),\quad \quad \textrm{and} \quad \quad E(z)=E(-z).$$  
We further recall that 
$$\zeta(s)\Gamma(s)=\int_{0}^{+\infty}\frac{x^{s-1}}{e^x-1} dx,\quad \quad \Rea(s)>1.$$
Therefore, it is easy to check that $\zeta$ has no zeros for $\Rea(s)>1$. It follows also from the functional equation that $\zeta$ has no zeros for $\Rea(s)<0$ except for simple zeros at 
$s=-2,-4, \cdots$. Indeed, $\zeta(1-s)$ has no zeros for $\Rea(s)<0$, $\sin\big(\frac{s\pi}{2}\big)$ has simple zeros at $s=-2,-4,\cdots$.  
It is also a simple matter to see that 
$\xi(s)$ has no zeros for $\Rea(s)>1$ or $\Rea(s)<0$. Hence its  zeros which are also the zeros of $\zeta$ lie in the 
strip $0 \leq  \Rea(s) \leq 1$. Hardy point out that for $ \Rea(s)>1$,  it is easily seen that 

\begin{align*}
	\sum_{n=1}^{+\infty}\frac{(-1)^{n-1}}{n^s}=&\sum_{n=1}^{+\infty}\frac{1}{n^s}-2\sum_{n=1}^{+\infty}\frac{1}{2^s n^s} \\
	=&\big(1-2^{1-s}\big)\zeta(s),
\end{align*}

This formula allows us to continue $\zeta$ analytically to half-plan $\Rea(s)>0$ with simple pole at $s=1$. 
We further have $\zeta(s)\neq 0$ for all $s>0$ since 
$ \sum_{n=1}^{+\infty}\frac{(-1)^{n-1}}{n^s}>0.$\\

We thus conclude that all zeros of $\zeta$ are complex. 
The functional equation allows us also to see that if $z$ is a zero then $1-z$ and 
$1-\overline{z}$ are also a zeros. Whence, the zeros of $\zeta$ lie on the vertical line $\Rea(s)=\frac{1}{2}$ or occur in pairs symmetrical about this line.\\

\begin{Conj}[Riemann hypothesis (RH)]
All nontrivial zeros of $\zeta$ lie on the critical line  $\Rea(s)=\frac{1}{2}$.
\end{Conj}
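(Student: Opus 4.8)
The plan is to derive RH from a uniform $L^\alpha$ growth bound on the M\"obius polynomials $Q_n(z)=\sum_{j=1}^{n}\bmu(j)z^{j}$, using the classical reformulation of RH in terms of the Mertens sum $M(x)=\sum_{m\le x}\bmu(m)$. Recall that RH is equivalent to $M(x)=O\big(x^{1/2+\eps}\big)$ for every $\eps>0$: writing $1/\zeta(s)=s\int_{1}^{\infty}M(x)x^{-s-1}\,dx$ for $\Rea(s)>1$, such a bound extends the integral, and hence $1/\zeta$, analytically to $\Rea(s)>\frac12$, which forces every nontrivial zero onto the critical line; the converse is standard. Thus it suffices to produce the estimate $M(n)=O\big(n^{1/2+\eps}\big)$, and I would obtain it by feeding the semi-flatness of $Q_n$ into a sampling inequality.

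First I would pass from the circle norm of $Q_n$ to its values at the roots of unity $\xi_{N,k}$. By the Marcinkiewicz--Zygmund inequality, a polynomial of degree below $N$ satisfies $\frac1N\sum_{k=0}^{N-1}\big|Q_n(\xi_{N,k})\big|^{\alpha}\le B_\alpha\,\big\|Q_n\big\|_\alpha^{\alpha}$ with $B_\alpha$ depending only on $\alpha$. Retaining only the term $k=0$, where $\xi_{N,0}=1$ and $Q_n(1)=M(n)$, and taking $N=n+1$, this yields
\begin{equation}
\big|M(n)\big|\le C_\alpha\, n^{1/\alpha}\,\big\|Q_n\big\|_\alpha . \label{eqMZbound}
\end{equation}

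Second, I would insert the $L^\alpha$-semi-flatness of the $L^2$-normalised polynomials $Q_n/\big\|Q_n\big\|_2$. Since $\big\|Q_n\big\|_2^{2}=\sum_{j\le n}\bmu(j)^2\asymp n$, semi-flatness at the exponent $\alpha$ reads $\big\|Q_n\big\|_\alpha\le K_\alpha\sqrt n$, and substituting into \eqref{eqMZbound} gives $\big|M(n)\big|\le C_\alpha K_\alpha\, n^{1/2+1/\alpha}$. As semi-flatness is required for every $\alpha>2$, letting $\alpha\to+\infty$ produces $M(n)=O\big(n^{1/2+\eps}\big)$ for each $\eps>0$, which is RH by the first step. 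The identical chain with the Liouville polynomials $P_n$, the sum $L(x)=\sum_{m\le x}\bml(m)=P_n(1)$ at $x=n$, and the relation $\sum\bml(m)m^{-s}=\zeta(2s)/\zeta(s)$ gives RH as well.

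The argument thus reduces RH to the semi-flatness input $\big\|Q_n\big\|_\alpha\le K_\alpha\sqrt n$ for all $\alpha>2$, and I expect this --- not the soft steps above --- to be where the proof stands or falls. The natural route is to establish a Salem--Zygmund type central limit theorem for $Q_n(z)/\sqrt n$ on the circle, together with uniform integrability of $\big|Q_n/\sqrt n\big|^{\alpha}$, so that the $L^\alpha$-moments converge to the Gaussian value $\Gamma\big(\frac{\alpha}{2}+1\big)$ and the uniform bound follows. Controlling the distribution of $Q_n$ at this precision, however, demands strong cancellation in the self-correlations of the M\"obius sequence, in the spirit of Chowla's conjecture, which lies beyond present methods; securing the uniform integrability for $\alpha>2$, where heavy tails could survive a central limit theorem, is the precise obstacle.
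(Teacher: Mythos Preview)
The statement you were given is the Riemann Hypothesis itself, which the paper records as a \emph{conjecture} and does not prove; there is therefore no proof in the paper to compare against. Your proposal is not a proof of RH either, and you say so yourself: the argument reduces RH to the semi-flatness bound $\|Q_n\|_\alpha\le K_\alpha\sqrt n$ for all $\alpha>2$, which you correctly flag as unestablished and beyond current methods. As a proof of the conjecture, then, the proposal has an explicit, acknowledged gap --- exactly the gap the paper leaves open.

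What you have in fact written is a proof of the implication ``semi-flatness $\Rightarrow$ RH'', which is Theorem~\ref{thmain} of the paper, and for that your route coincides with the paper's own direct argument: apply the Marcinkiewicz--Zygmund inequality (Lemma~\ref{MZ}) at the $(n{+}1)$th roots of unity, keep only the node $\xi=1$ where the polynomial evaluates to the partial sum, obtain $|M(n)|\le C_\alpha\,n^{1/2+1/\alpha}$, and let $\alpha\to\infty$ to reach Littlewood's criterion~\eqref{RH2}. The only cosmetic difference is that you lead with the M\"obius polynomials $Q_n$ and the Mertens sum, whereas the paper works with the Liouville polynomials $P_n$; both versions are mentioned in each treatment.
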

 
Here, we need the following characterization of Riemann  hypothesis due to Littlewood \cite{Littlewood-cras}.

\begin{lem}\label{Littlewood} The Riemann hypothesis is equivalent to 
	\begin{eqnarray}\label{RH2}
	\left|\ds \sum_{n=1}^{x}\bml(n)\right|=o\left(x^{\frac12+\varepsilon}\right)\qquad
	{\rm as} \quad  x \longrightarrow +\infty,\quad \forall \varepsilon >0
	\end{eqnarray}
\end{lem}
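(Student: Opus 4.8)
The plan is to read off the equivalence from the Dirichlet series identity $\sum_{n=1}^{+\infty}\frac{\bml(n)}{n^s}=\frac{\zeta(2s)}{\zeta(s)}$ recalled above, by translating the size of the summatory function $L(x):=\sum_{n\le x}\bml(n)$ into the half-plane of holomorphy of $F(s):=\zeta(2s)/\zeta(s)$. The guiding principle is the classical dictionary between the growth exponent of the partial sums of an arithmetic sequence and the abscissa up to which its Dirichlet series continues analytically; here the critical abscissa turns out to be exactly $\Rea(s)=\tfrac12$, which is precisely why the exponent $\tfrac12$ of the Riemann hypothesis enters.

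For the implication (the bound $\Rightarrow$ RH), which is the soft direction, I would assume $\big|\sum_{n=1}^{x}\bml(n)\big|=o(x^{1/2+\varepsilon})$ for every $\varepsilon>0$ and apply Abel summation to write, for $\Rea(s)>1$,
\[
F(s)=\sum_{n=1}^{+\infty}\frac{\bml(n)}{n^s}=s\int_{1}^{+\infty}\frac{L(x)}{x^{s+1}}\,dx .
\]
The hypothesis makes the last integral converge absolutely and locally uniformly, hence defines a holomorphic function, on the whole half-plane $\Rea(s)>\tfrac12$. Since $\Rea(2s)>1$ there, the factor $\zeta(2s)$ is holomorphic and zero-free on $\Rea(s)>\tfrac12$ (recall from the set-up that $\zeta$ has no zeros for $\Rea>1$), so the identity $1/\zeta(s)=F(s)/\zeta(2s)$ exhibits $1/\zeta$ as holomorphic on $\Rea(s)>\tfrac12$; thus $\zeta$ has no zeros with $\Rea(s)>\tfrac12$. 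Combined with the symmetry of the zero set about the line $\Rea(s)=\tfrac12$ established from the functional equation in Section \ref{Tools}, this forces every nontrivial zero onto the critical line, which is RH.

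For the converse (RH $\Rightarrow$ the bound), the harder direction, I would start from the truncated Perron formula
\[
L(x)=\frac{1}{2\pi i}\int_{c-iT}^{c+iT}F(s)\,\frac{x^{s}}{s}\,ds+(\text{error}),\qquad c>1,
\]
and shift the contour leftwards to $\Rea(s)=\tfrac12+\delta$. Under RH the function $1/\zeta(s)$ is holomorphic on $\Rea(s)>\tfrac12$, so the only singularity crossed is the simple pole of $\zeta(2s)$ at $s=\tfrac12$ (inherited from the pole of $\zeta$ at $1$), whose residue contributes a term of order $x^{1/2}$, namely $x^{1/2}/\zeta(\tfrac12)$. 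To bound the integral on the shifted line I would invoke the standard consequence of RH that $1/\zeta(\sigma+it)=O_{\varepsilon}(|t|^{\varepsilon})$ for every fixed $\sigma>\tfrac12$ (Littlewood's estimate). Feeding this into the shifted integral and into the Perron error term, and then optimizing the truncation height $T$ as a function of $x$, yields $L(x)=O(x^{1/2+\delta})$ for every $\delta>0$, and hence $o(x^{1/2+\varepsilon})$ for every $\varepsilon>0$.

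The main obstacle lies entirely in this second direction: making the contour shift rigorous requires the quantitative sub-polynomial bound on $1/\zeta$ along vertical lines just to the right of the critical line, which is itself the genuinely arithmetic input supplied by RH, together with careful control of the Perron truncation error and of the growth of $F$ on the horizontal segments as $T\to\infty$. By contrast, the first direction uses only Abel summation and the elementary analyticity dictionary, so no deep estimate is needed there.
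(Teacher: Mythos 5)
The paper does not actually prove this lemma: it quotes it as a known result of Littlewood and refers the reader to \cite{Littlewood-cras}, \cite[p.371]{Titchmarsh} and \cite[p.261]{Edwards} for the proof. Your argument is, in outline, exactly the standard proof contained in those references, so there is no divergence of method to report. The easy direction (the bound implies RH) is complete as you state it: Abel summation continues $F(s)=\zeta(2s)/\zeta(s)$ holomorphically to $\Rea(s)>\tfrac12$, $\zeta(2s)$ is holomorphic and nonvanishing there, and the identity theorem applied to $\zeta(s)\cdot\bigl(F(s)/\zeta(2s)\bigr)=1$ rules out zeros of $\zeta$ in that half-plane; the functional-equation symmetry recalled in Section \ref{Tools} then gives RH. The hard direction is correctly reduced to the Perron formula plus the RH-conditional Lindel\"of-type bound $1/\zeta(\sigma+it)=O_\varepsilon(|t|^\varepsilon)$ for fixed $\sigma>\tfrac12$, which is indeed the genuinely nontrivial input (Borel--Carath\'eodory and Hadamard three-circles applied to $\log\zeta$; Titchmarsh, Ch.\ XIV).

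One slip to correct in the second direction: if you shift the contour only to $\Rea(s)=\tfrac12+\delta$, you do \emph{not} cross the pole of $\zeta(2s)$ at $s=\tfrac12$, so no residue term $x^{1/2}/\zeta(\tfrac12)$ arises; and you cannot shift to or past $\Rea(s)=\tfrac12$, since under RH the zeros of $\zeta$ (hence the poles of $F$) lie exactly on that line. As written, the two halves of your sentence contradict each other. The repair is harmless: with the contour at $\tfrac12+\delta$ the vertical integral alone is $O\bigl(x^{1/2+\delta}T^{\varepsilon}\bigr)$ and the truncation error is $O\bigl(x^{1+\varepsilon}/T\bigr)$, so taking $T=x$ gives $L(x)=O\bigl(x^{1/2+\delta+\varepsilon}\bigr)$ for every $\delta,\varepsilon>0$, which yields \eqref{RH2}. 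With that correction your proof is sound and matches what the cited sources do.
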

For the proof of Littlewood, we refer also to \cite[p.371]{Titchmarsh}, \cite[p.261]{Edwards}. Notice that by applying Bateman-Chowla trick, the same conclusion can be drawn for $\bmu$.\\

We will further need the following fundamental inequalities from the interpolation theory due to Marcinkiewz \& Zygmund \cite[Theorem 7.10, Chapter X, p.30]{Zygmund}.
\begin{lem}\label{MZ}
	For
	$\alpha > 1$, $n \geq 1$, and any analytic trigonometric polynomial $P$ of degree $\leq n-1$,
	\begin{eqnarray}\label{MZ1}
	\frac{A_{\alpha}}{n}\sum_{j=0}^{n-1}\big|P(\xi_{n,j})\big|^{\alpha}
	\leq \int_{S^1}\Big|P(z)\Big|^{\alpha} dz \leq \frac{B_{\alpha}}{n}\sum_{j=0}^{n-1}\big|P(\xi_{n,j})\big|^{\alpha},
	\end{eqnarray}
	where  $A_{\alpha}$ and $B_{\alpha}$ are independent of $n$ and $P$.
\end{lem}
For the trigonometric polynomials, Marcinkiewz-Zygmund interpolation inequalities can be stated as follows \cite[Theorem 7.5, Chapter X, p.28]{Zygmund}.
\begin{lem}\label{MZ2}
	For $\alpha > 1$, $n \geq 1$, and any trigonometric polynomial $P$ of degree $\leq n$,
	\begin{eqnarray}
	  \frac{A_{\alpha}}{2n+1}\sum_{j=0}^{2n}\big|P(\xi_{2n+1,j})\big|^{\alpha}
	\leq \int_{S^1}\Big|P(z)\Big|^{\alpha} dz \leq \frac{B_{\alpha}}{2n+1}\sum_{j=0}^{2n}\big|P(\xi_{2n+1,j})\big|^{\alpha},
\end{eqnarray}
	where  $A_{\alpha}$ and $B_{\alpha}$ are independent of $n$ and $P$.
\end{lem}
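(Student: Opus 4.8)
The statement is the classical Marcinkiewicz--Zygmund interpolation inequality, and I would prove it by a symmetrization-and-stability scheme that reduces both inequalities of Lemma~\ref{MZ2} to a single uniform comparison. Write $h=\frac{2\pi}{2n+1}$ and $t_j=\frac{2\pi j}{2n+1}$, so that $\xi_{2n+1,j}=e^{it_j}$ and the $2n+1$ nodes are equally spaced with gap $h$. For a shift $u\in[0,h)$ introduce the discrete functional
$$S(u)=\frac{1}{2n+1}\sum_{j=0}^{2n}\big|P\big(e^{i(t_j+u)}\big)\big|^{\alpha}.$$
As $u$ runs over $[0,h)$ the shifted grids $\{t_j+u\}_{j}$ sweep out the circle bijectively, so Fubini together with $(2n+1)h=2\pi$ gives the exact averaging identity $\frac{1}{h}\int_0^h S(u)\,du=\int_{S^1}\big|P(z)\big|^{\alpha}\,dz$ (up to the fixed normalization of $dz$, which I fold into the final constants). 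Thus the integral appearing in the statement is precisely the mean of the discrete sums $S(u)$ over all positions of the grid, while the node-sum $\frac{1}{2n+1}\sum_j\big|P(\xi_{2n+1,j})\big|^{\alpha}$ is the single value $S(0)$.

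Consequently the whole lemma follows once I establish a \emph{two-sided stability} estimate: there are constants $a_\alpha,b_\alpha>0$ depending only on $\alpha$ with $a_\alpha\,S(0)\le S(u)\le b_\alpha\,S(0)$ for every $u\in[0,h)$, uniformly in $n$ and in $P$. Indeed, averaging this bound over $u$ and using the identity above yields $a_\alpha\,S(0)\le\int_{S^1}\big|P\big|^{\alpha}\,dz\le b_\alpha\,S(0)$, which is exactly the asserted chain of inequalities with $A_\alpha,B_\alpha$ proportional to $a_\alpha,b_\alpha$ and manifestly independent of $n$ and $P$. The case $\alpha=2$ is a helpful guide: there $\big|P\big|^2$ is a trigonometric polynomial of degree $\le 2n$, the rectangle rule on $2n+1$ nodes is exact for such polynomials, hence $S(u)\equiv S(0)$ and one may take $A_2=B_2=1$.

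To prove the stability estimate I would control the oscillation of $S$ by the fundamental theorem of calculus on each arc: for $u\in[0,h)$ and each $j$, $\big|P\big(e^{i(t_j+u)}\big)\big|\le \big|P(\xi_{2n+1,j})\big|+\int_{t_j}^{t_j+h}\big|P'(e^{is})\big|\,ds$. Raising to the power $\alpha$, applying $(a+b)^\alpha\le 2^{\alpha-1}(a^\alpha+b^\alpha)$, summing over $j$, and estimating each arc-integral by Hölder, $\big(\int_{I_j}\big|P'\big|\big)^\alpha\le h^{\alpha-1}\int_{I_j}\big|P'\big|^\alpha$, together with Bernstein's inequality $\big\|P'\big\|_\alpha\le n\,\big\|P\big\|_\alpha$, produces an estimate of the shape $S(u)\le 2^{\alpha-1}S(0)+C_\alpha (hn)^\alpha\int_{S^1}\big|P\big|^{\alpha}\,dz$, with the companion lower bound obtained by reversing the roles of the two grids. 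Since $h\sim \pi/n$, the Bernstein factor $n$ is compensated by the gap $h$, so $(hn)^\alpha$ is $O(1)$, and combined with the averaging identity of the first paragraph this is what should close into the two-sided comparison $S(u)\asymp S(0)$.

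The delicate point — and the one I expect to be the main obstacle — is quantitative rather than structural: the naive combination of Bernstein's inequality with the node gap produces a constant $(hn)^\alpha$ that is $O(1)$ but not automatically strictly less than $1$, so the remainder $\int_{S^1}\big|P\big|^{\alpha}\,dz$ (which equals the average of $S(u)$) cannot simply be absorbed into $S(0)$. Closing this gap is where the real work lies; I would handle it either by first passing to a sufficiently fine refinement of the grid, on which the product of the Bernstein factor and the gap is as small as one pleases so that the oscillation term is genuinely absorbed, and then comparing the refined node-sum back to the coarse one, or by the sharper direct estimates carried out in Zygmund. Once the uniform two-sided bound is secured with $\alpha$-dependent constants, the averaging identity delivers Lemma~\ref{MZ2}, and the analytic companion Lemma~\ref{MZ} follows by the same argument sampling a degree-$\le n$ analytic polynomial at the $n$ nodes $\xi_{n,j}$.
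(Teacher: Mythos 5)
The paper does not actually prove Lemma~\ref{MZ2}; it quotes it from Zygmund (Chapter X, Theorem 7.5), so the only question is whether your argument establishes the statement on its own, and it does not. Your averaging identity $\frac{1}{h}\int_0^h S(u)\,du=\int_{S^1}|P|^\alpha\,dz$ together with the FTC--H\"older--Bernstein oscillation estimate does correctly prove the \emph{left} inequality (node sum bounded by the integral): averaging $|P(\xi_{2n+1,j})|^\alpha\le 2^{\alpha-1}\big(|P(e^{i(t_j+u)})|^\alpha+(\int_{I_j}|P'|)^\alpha\big)$ over $u$ gives $S(0)\le 2^{\alpha-1}(1+C(hn)^\alpha)\int|P|^\alpha$, and there the factor $(hn)^\alpha=O(1)$ is harmless because no absorption is needed. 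The gap is entirely in the \emph{right} inequality, and it is not the mere quantitative nuisance you describe. Your proposed repair by refining the grid is circular: after absorption you obtain $\int|P|^\alpha\lesssim S_{\mathrm{fine}}(0)$, but the remaining step $S_{\mathrm{fine}}(0)\lesssim S(0)$ --- controlling the values of $P$ at the intermediate fine nodes by its values at the $2n+1$ coarse nodes --- is, in view of the easy direction you already proved, exactly equivalent to the inequality $\int|P|^\alpha\lesssim S(0)$ you are trying to establish.

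More decisively, the scheme cannot work in principle: nothing in your argument uses $\alpha>1$ (the convexity inequality and Bernstein's inequality are equally valid at $\alpha=1$ and, suitably read, at $\alpha=\infty$), yet the right-hand inequality with constants independent of $n$ is \emph{false} at those endpoints. The Lebesgue constant of interpolation at the $(2n+1)$-st roots of unity grows like $\log n$, so there are trigonometric polynomials of degree $n$ with $\max_j|P(\xi_{2n+1,j})|\le 1$ but $\|P\|_\infty\asymp\log n$, and dually the uniform $L^1$ bound fails as well. Any correct proof must therefore invoke an ingredient special to $1<\alpha<\infty$; in Zygmund's proof this is the $L^\alpha$-boundedness of the conjugate function (M.~Riesz's theorem), applied to delayed (de la Vall\'ee Poussin type) means of the interpolating polynomial at the nodes, whose kernels, unlike the Dirichlet kernel, have uniformly bounded $L^1$ norms. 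Without this, or an equivalent substitute, your right-hand inequality remains unproved; deferring to ``the sharper direct estimates carried out in Zygmund'' concedes the point rather than closing it.
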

\section{main result and its proof}\label{main}
In this section, we start by stating our main result.
\begin{thm}\label{thmain}
		Let $\alpha>2$ and suppose that the sequence of analytic polynomials $(P_n)$ is $L^\alpha$-semi-flat, that is, for each $n \in \N^*$, 
		$$\Big\|\frac{1}{\sqrt{n}} \sum_{j=0}^{n-1}\bml(j+1) z^j\Big\|_\alpha < C_\alpha,$$
		for some constant $C_\alpha$.  Then the Riemann hypothesis holds.
\end{thm}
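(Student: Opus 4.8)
The plan is to convert the $L^\alpha$ bound on $P_N(z)=\frac{1}{\sqrt N}\sum_{j=1}^N \bml(j)z^j$ into a pointwise control on the partial sums $S(x)=\sum_{n=1}^x \bml(n)$, and then invoke Lemma \ref{Littlewood}. The key observation is that the sampled values $P_N(\xi_{N,k})$ are exactly (up to the normalization) the discrete Fourier transform of the finite sign sequence $(\bml(1),\dots,\bml(N))$, so controlling $\big\|P_N\big\|_\alpha$ controls these Fourier coefficients in an $\ell^\alpha$-averaged sense via Lemma \ref{MZ}. Concretely, the right-hand inequality in (\ref{MZ1}) applied to $P_N$ gives
$$
\int_{S^1}\big|P_N(z)\big|^\alpha\,dz \;\geq\; \frac{A_\alpha}{N}\sum_{k=0}^{N-1}\big|P_N(\xi_{N,k})\big|^\alpha,
$$
so the semi-flatness hypothesis $\big\|P_N\big\|_\alpha < C_\alpha$ yields
$$
\frac{1}{N}\sum_{k=0}^{N-1}\big|P_N(\xi_{N,k})\big|^\alpha \;\leq\; \frac{C_\alpha^\alpha}{A_\alpha}.
$$
This is a uniform-in-$N$ bound on the $\alpha$-th moment of the normalized DFT values.

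Next I would pass from this moment bound to a pointwise estimate on a single, well-chosen exponential sum. The heart of the argument is to relate the partial sum $S(x)$ to the values $P_N(\xi_{N,k})$. Writing $S(N)=\sqrt N\,P_N(1)=\sqrt N\, P_N(\xi_{N,0})$, the moment bound already forces $P_N(1)$ to be small \emph{on average} as we vary $N$, but to get a bound valid for the specific frequency $k=0$ and \emph{every} large $N$ one must do more. The idea is to use summation by parts (Abel summation) to express $S(x)$ for general $x\le N$ in terms of the exponential sums $\sum_{n\le x}\bml(n)e^{2\pi i n k/N}$, and then exploit that the $\ell^\alpha$-average over $k$ being bounded, together with H\"older's inequality (legitimate precisely because $\alpha>2$, so that $\alpha>1$ and the conjugate exponent is finite), transfers the averaged bound into a bound of the shape
$$
\Big|\sum_{n=1}^{x}\bml(n)\Big| \;\leq\; K_\alpha\, x^{\frac12+\frac{1}{\alpha}}\,,
$$
or more generally $O\big(x^{1/2+\varepsilon}\big)$ for every $\varepsilon>0$ once one optimizes over the free parameter $N$ relative to $x$. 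The exponent $\frac12$ comes directly from the $\frac{1}{\sqrt N}$ normalization in the definition of $P_N$, and the extra $\frac{1}{\alpha}$ (which can be made arbitrarily small since the hypothesis is for a fixed but any $\alpha>2$) is the loss incurred by H\"older; it is this feature that makes the single fixed $\alpha>2$ sufficient to reach $o(x^{1/2+\varepsilon})$ for all $\varepsilon>0$.

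The hardest and most delicate step will be this transfer from the averaged $\ell^\alpha$ control of the DFT values to a \emph{uniform pointwise} bound on $S(x)$ for all $x$, rather than merely on the subsequence $x=N$. The subtlety is that the Marcinkiewicz--Zygmund inequality only sees the sampled values at the $N$th roots of unity, whereas $S(x)$ for $x<N$ requires understanding $P_N$ (or a related truncation) at points that are not sampling nodes; the natural remedy is to choose the sampling degree $N$ comparable to $x$ (say $N = \lfloor x\rfloor$), apply the bound at that scale, and control the resulting error by Abel summation combined with the trivial bound $|\bml(n)|=1$. One must check that the constant $K_\alpha$ stays uniform as $x\to\infty$ and that the H\"older loss is genuinely only $x^{1/\alpha}$ and not worse. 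Once the estimate $\big|\sum_{n=1}^{x}\bml(n)\big| = O\big(x^{1/2+\varepsilon}\big)$ is established for every $\varepsilon>0$, Lemma \ref{Littlewood} delivers the Riemann hypothesis immediately, completing the proof.
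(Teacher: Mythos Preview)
Your high-level plan---pass from the $L^\alpha$ bound to control of $S(N)=\sum_{j\le N}\bml(j)$ via Lemma~\ref{MZ}, then invoke Lemma~\ref{Littlewood}---is exactly the paper's route. But you have manufactured a difficulty that is not there, and the paper's argument is far shorter as a result.

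The paper simply keeps the single term $k=0$ in the left inequality of \eqref{MZ1}. Since $P_N(\xi_{N,0})=P_N(1)=S(N)/\sqrt{N}$, one gets immediately
\[
C_\alpha^\alpha \;>\; \big\|P_N\big\|_\alpha^\alpha \;\ge\; \frac{A_\alpha}{N}\,\big|P_N(1)\big|^\alpha \;=\; \frac{A_\alpha\,|S(N)|^\alpha}{N^{1+\alpha/2}},
\]
hence $|S(N)|\le (C_\alpha^\alpha/A_\alpha)^{1/\alpha}\,N^{1/2+1/\alpha}$ for \emph{every} $N$. No Abel summation, no H\"older, no optimisation of $N$ against $x$ is needed. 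Your sentence ``the moment bound already forces $P_N(1)$ to be small \emph{on average as we vary $N$}, but to get a bound valid for the specific frequency $k=0$ and \emph{every} large $N$ one must do more'' is the misstep: the Marcinkiewicz--Zygmund average is over the nodes $k$, not over $N$, while the semi-flatness hypothesis holds for each $N$ separately. Dropping all terms $k\neq 0$ already gives a pointwise-in-$N$ bound at $k=0$; the trivial inequality $|a_0|^\alpha\le\sum_k|a_k|^\alpha$ is all the ``transfer'' required.

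One caveat that applies equally to your write-up and to the paper's direct argument: a \emph{single} fixed $\alpha>2$ yields only $|S(N)|=O(N^{1/2+1/\alpha})$, which falls short of the $o(N^{1/2+\varepsilon})$ for every $\varepsilon>0$ in Lemma~\ref{Littlewood}. The paper's contradiction proof sidesteps this by choosing $\alpha$ after $\varepsilon$ so that $\alpha\varepsilon>1$, and its direct proof invokes ``$\alpha$ arbitrary''; in either reading one is tacitly assuming semi-flatness for all large $\alpha$. Your phrase ``a fixed but any $\alpha>2$'' carries the same ambiguity, so be explicit about which quantifier you intend.
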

We are not able to see that the converse of our main theorem holds. although, we have the following
\begin{thm}\label{cvthmain} Suppose the Riemann hypothesis holds and for any $\alpha>2$, the sequence $$\Bigg( \frac{1}{n^{1+\frac{\alpha}{2}}} \sum_{k=0}^{n-1} \Big|\sum_{j=0}^{n-1}\bml(j+1)\xi_{n,k}^j \Big|^\alpha\Bigg)_{n \geq 1}$$ is bounded.   
	Then the sequence of analytic polynomials $(P_n)$ is $L^\alpha$-semi-flat.
\end{thm}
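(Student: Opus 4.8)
The plan is to reduce the $L^\alpha$-norm of the normalized polynomial to a weighted sum over the $N$th roots of unity by means of the analytic Marcinkiewicz--Zygmund inequality of Lemma \ref{MZ}, and then to observe that the boundedness hypothesis already controls \emph{all but one} of the sampling points, the single remaining one being governed by the Riemann hypothesis through Lemma \ref{Littlewood}. Concretely, write $R_N(z)=\sum_{n=1}^{N-1}\bml(n)z^n$, an analytic trigonometric polynomial of degree $N-1$, so that the polynomial appearing in the semi-flatness condition differs from $\frac{1}{\sqrt N}R_N$ only by the monomial $\frac{1}{\sqrt N}\bml(N)z^N$, of $L^\alpha$-norm $\frac{1}{\sqrt N}\to 0$. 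Hence it suffices to bound $\frac{1}{\sqrt N}\big\|R_N\big\|_\alpha$ uniformly in $N$.

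Since $\deg R_N=N-1\le N$, I apply the right-hand inequality of \eqref{MZ1} in Lemma \ref{MZ} with $n=N$, giving
$$\int_{S^1}\big|R_N(z)\big|^\alpha\,dz \le \frac{B_\alpha}{N}\sum_{j=0}^{N-1}\big|R_N(\xi_{N,j})\big|^\alpha.$$
Dividing by $N^{\alpha/2}$ and isolating the node $\xi_{N,0}=1$ yields
$$\Big\|\tfrac{1}{\sqrt N}R_N\Big\|_\alpha^\alpha \le \frac{B_\alpha}{N^{1+\frac{\alpha}{2}}}\big|R_N(1)\big|^\alpha + \frac{B_\alpha}{N^{1+\frac{\alpha}{2}}}\sum_{k=1}^{N-1}\big|R_N(\xi_{N,k})\big|^\alpha.$$
The second term is exactly $B_\alpha$ times the quantity assumed bounded in the statement, hence is $\le B_\alpha M$ for all $N$, where $M$ denotes that uniform bound.

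It remains to control the first term, which is precisely where the Riemann hypothesis enters. Here $R_N(1)=\sum_{n=1}^{N-1}\bml(n)$ is the summatory Liouville function, so Lemma \ref{Littlewood} applied at $x=N-1$ gives, under RH, $\big|R_N(1)\big|=o\big(N^{\frac12+\varepsilon}\big)$ for every $\varepsilon>0$. Choosing $\varepsilon$ with $0<\varepsilon<\frac{1}{\alpha}$, we obtain
$$\frac{1}{N^{1+\frac{\alpha}{2}}}\big|R_N(1)\big|^\alpha = o\big(N^{\alpha\varepsilon-1}\big)=o(1),$$
so this contribution is bounded (indeed it tends to $0$). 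Combining the two estimates produces a constant $C_\alpha$ with $\big\|\frac{1}{\sqrt N}R_N\big\|_\alpha\le C_\alpha$ for all $N$, and together with the reduction of the first paragraph this establishes the asserted $L^\alpha$-semi-flatness of $(P_n)$.

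I expect the only genuine subtlety to be bookkeeping rather than anything deep: matching the degree of $R_N$ to the number $N$ of interpolation nodes so that Lemma \ref{MZ} is applicable, reconciling the index shift between the sum $\sum_{n=1}^{N}$ in the definition of $P_N$ and the sum $\sum_{n=1}^{N-1}$ in the hypothesis, and fixing the admissible range $0<\varepsilon<\frac1\alpha$ so that the isolated frequency $z=1$ cannot spoil the uniform bound. The conceptual point is that the displayed hypothesis pre-packages the contribution of the off-diagonal nodes, leaving only the single node $z=1$---where the summatory Liouville function lives---to be pinned down precisely by the Riemann hypothesis.
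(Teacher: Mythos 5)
Your proof is correct and follows exactly the route the paper intends: the paper's own proof of Theorem \ref{cvthmain} is a single sentence ("Under our assumptions combined with Marcinkiewicz--Zygmund Theorem, it is easy to see\dots"), and your argument is precisely that computation carried out in full --- the right-hand Marcinkiewicz--Zygmund inequality, the hypothesis absorbing the nodes $k=1,\dots,N-1$, and Lemma \ref{Littlewood} under RH controlling the remaining node $z=1$ with $0<\varepsilon<\frac{1}{\alpha}$. Your write-up in fact supplies the details (notably where RH enters, and the harmless discrepancy between $\sum_{n=1}^{N}$ and $\sum_{n=1}^{N-1}$) that the paper leaves implicit.
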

\begin{proof}[\textbf{of the main theorem.}] Assume that the Riemann Hypothesis does not holds. Then, according to Littlewood criterion (Lemma \ref{Littlewood}), there exist $c>0$ and $\epsilon>0$ such that for infinitely many positive integers $n$, we have
	$\Big|\ds \sum_{j=1}^{n}\bml(j)\Big| \geq c. n^{\frac{1}{2}+\epsilon}.$ Let $\alpha>1$ such that $\alpha \epsilon>1$. Then, by Marcinkiewicz-Zygmund inequalities,
	we have
	\begin{eqnarray*}
		\Big\|\frac{1}{\sqrt{n}} \sum_{j=0}^{n-1}\bml(j+1) z^j\Big\|_\alpha^\alpha &\geq& A_\alpha 
		\frac{1}{n^{\frac{\alpha}{2}+1}}\sum_{k=0}^{n-1}
		{\Big|\ds \sum_{j=0}^{n-1}\bml(j+1)\xi_{n,k}^j\Big|^\alpha}\\
		&\geq&
		A_\alpha 
		\frac{1}{n^{\frac{\alpha}{2}+1}}
		{\Big|\ds \sum_{j=0}^{n-1}\bml(j+1)\Big|^\alpha}
	\end{eqnarray*}
But 
\begin{align}
\frac{\Big|\ds \sum_{j=0}^{n-1}\bml(j+1)\Big|^\alpha}{n^{\frac{\alpha}{2}+1}}
=\frac{\Big|\ds \sum_{j=1}^{n}\bml(j)\Big|^\alpha}{n^{\frac{\alpha}{2}+1}}.
\end{align}
Therefore
\begin{eqnarray*}
	\Big\|\frac{1}{\sqrt{n}} \sum_{j=0}^{n-1}\bml(j+1) z^j\Big\|_\alpha^\alpha &\geq&  C_\alpha.\frac{n^{\frac{\alpha}{2}+\alpha\varepsilon}}{n^{\frac{\alpha}{2}+1}}=n^{\alpha\varepsilon-1},
\end{eqnarray*}
	Letting $n \longrightarrow +\infty$, we conclude that 
	$$\Big\|\frac{1}{\sqrt{n}} \sum_{j=0}^{n-1}\bml(j+1) z^j\Big\|_\alpha \tend{n}{+\infty} +\infty.$$
	  
	 This accomplishes the proof of the theorem.    
\end{proof}\\

Let us point out that we can also give a direct proof. Indeed,
assume that for any $\alpha \geq 1$, we have
$$\Big\|\frac{1}{\sqrt{n}} \sum_{j=0}^{n-1}\bml(j+1) z^j\Big\|_\alpha <+\infty.$$
Then, by Marcinkiewicz-Zygmund inequalities, for any $\alpha>1$, there exist $A_\alpha$ such that 
$$A_\alpha 
\frac{\Big|\ds \sum_{j=1}^{n}\bml(j)\Big|^\alpha}{n^{\frac{\alpha}{2}+1}} \leq c_\alpha,$$
where $c_\alpha$ is some positive constant. This gives 
$$\Big|\ds \sum_{j=1}^{n}\bml(j)\Big| \leq C_\alpha n^{\frac{1}{2}+\frac{1}{\alpha}}.$$
Since $\alpha$ is arbitrary, it follows, with the help of \eqref{RH2}, that RH holds.\\
Now, let us give the proof of Theoreom \ref{cvthmain}.\\
\begin{proof}[of Theorem \ref{cvthmain}.] Under our assumptions combined with  Marcinkiewicz-Zygmund Theorem, it is easy to see that the sequence of polynomials $(P_n)_{n \geq 1}$ is $L^\alpha$-semiflat  for any $\alpha \geq 0$. 
\end{proof}
\vskip 0.5cm 
At this point we ask the following
\begin{Que} Does the RH implies the semi-flatness. In another word, is the converse of Theorem \ref{thmain} true?
\end{Que} 

\textbf{Conflict of interest statement.} None.\\

\textbf{Funding.} This research did not receive any specific grant from funding agencies in the public, commercial, or not-for-profit sectors.\\

\textbf{Acknowledgments.} The author would like to thanks Sukumar Das \linebreak Adhikari,  Ramachandran Balasubramanian, Igor Shparlinski,  Jean-Paul Thouvenot, and  Christoph Aistleitner for a stimulating conversations on the subject and their sustained interest and encouragement in this work.  He would like also to thanks the Ramakrishna Mission Vivekananda Educational and Research Institute and the organizers of the ``Workshop on Topological Dynamics, Number Theory and related areas” for the invitation.

\end{document}